\renewcommand{\comment}[1]{}
\newcommand{\eq}{\begin{equation}}
\newcommand{\en}{\end{equation}}
\newcommand{\rr}{\mathbb{R}}
\newcommand{\rbar}{\overline{\rr}}
\newcommand{\NN}{\mathbb{N}}
\newcommand{\norm}[1]{\left\lVert #1 \right\rVert}
\newcommand{\abs}[1]{\left\lvert #1 \right\rvert}
\newcommand{\iprod}[1]{\left\langle #1 \right\rangle }
\newcommand{\VS}{\mathcal{V}}
\newcommand{\topo}{\mathcal{E}}
\newcommand{\omap}{\mathbf{s}}
\newcommand{\dom}{\mathrm{dom}}
\begin{document}

\theoremstyle{plain}
\newtheorem{thm}{Theorem}
\newtheorem{lemma}[thm]{Lemma}
\newtheorem{prop}[thm]{Proposition}
\newtheorem{cor}[thm]{Corollary}

\theoremstyle{definition}
\newtheorem{defn}{Definition}
\newtheorem{asmp}{Assumption}
\newtheorem{notn}{Notation}
\newtheorem{prb}{Problem}

\theoremstyle{remark}
\newtheorem{rmk}{Remark}
\newtheorem{exm}{Example}
\newtheorem{clm}{Claim}

\title[Embedding optimal transports]{Embedding optimal transports in statistical manifolds}

\author{Soumik Pal}
\address{Department of Mathematics\\ University of Washington\\ Seattle, WA 98195}
\email{soumikpal@gmail.com}

\keywords{Optimal transport, exponentially concave functions, information geometry, exponential families}

\subjclass[2000]{Primary 91G10; Secondary 46N10}

\thanks{This research is partially supported by NSF grant DMS-1308340 and DMS-1612483}

\date{\today}

\begin{abstract} We consider Monge-Kantorovich optimal transport problems on $\rr^d$, $d\ge 1$, with a convex cost function given by the cumulant generating function of a probability measure. Examples include the Wasserstein-$2$ transport whose cost function is the square of the Euclidean distance and corresponds to the cumulant generating function of the multivariate standard normal distribution. The optimal coupling is usually described via an extended notion of convex/concave functions and their gradient maps. These extended notions are nonintuitive and do not satisfy useful inequalities such as Jensen's inequality. Under mild regularity conditions, we show that all such extended gradient maps can be recovered as the usual supergradients of a nonnegative concave function on the space of probability distributions. This embedding provides a universal geometry for all such optimal transports and an unexpected connection with information geometry of exponential families of distributions.  
\end{abstract}

\maketitle

\section{Introduction} Fix $d\in \NN$, where $\NN$ is the set of natural numbers. Fix a Borel probability measure $\mu_0$ on $\rr^d$. This will be called the base measure. For any $\mu_0$ integrable function $f$, denote the $\mu_0$ expectation of $f$ by $\mu_0\left( f(x) \right)$ or $\mu_0(f)$. Let $\Lambda_0$ denote the cumulant generating function of $\mu_0$. That is, for any $\theta\in \rr^d$, we have 
\[
\Lambda_0(\theta):= \log \mu_0\left(  e^{\iprod{\theta, x}} \right). 
\]
We will throughout assume that its domain is the entire space, i.e., $\dom\left( \Lambda_0 \right)=\rr^d$. 

It is well-known that $\Lambda_0$ is a convex function, which is strict whenever the support of $\mu_0$ is not a singleton. Moreover, $\Lambda_0(0)=0$. For $\theta, \psi\in \rr^d$, we define a \textit{cost function} on $\rr^d \times \rr^d$ given by
\[
c(\theta, \psi):=\Lambda_0(\theta-\psi)=\log \mu_0\left(  e^{\iprod{\theta-\psi, x}} \right). 
\]

Suppose $P$ and $Q$ are two probability measures on $\rr^d$. Let $\Pi(P,Q)$ be the set of couplings of $(P,Q)$, i.e., the set of joint probability distributions on $\rr^d \times \rr^d$ whose first marginal is $P$ and the second is $Q$. Consider the Monge-Kantorovich optimal transport problem of transporting $P$ to $Q$ with cost $c$. That is, we find the minimizer in the optimization problem
\eq\label{eq:OTstatement}
\min_{R \in \Pi(P,Q)} R\left[  \Lambda_0(\theta-\psi)   \right], \quad (\theta, \psi) \sim R.
\en 
The optimal coupling $R$, if exists, is said to solve the Monge problem if $\psi$ is a deterministic function of $\theta$, $R$ almost surely. Let us consider two known examples.

\begin{exm}\label{exmp:gaussian}
Let $\mu_0\sim N(0,I)$ be the standard Gaussian distribution on $\rr^d$. In this case, $\Lambda_0(\theta-\psi)=\norm{\theta-\psi}^2/2$ and the optimal coupling between $P$ and $Q$ is the well-known Wasserstein-$2$ or $W_2$ coupling. 
\end{exm}

\begin{exm}\label{exmp:simplex}
Let $e_i, i\in [d]$, be the standard basis in $\rr^d$. Additionally, let $e_0$ denote the zero vector in $\rr^d$. Let $\mu_0$ denote the probability measure that puts mass $1/(d+1)$ at each $e_i$. That is, if $\delta$ refers to the unit Dirac delta mass, then 
\[
\mu_0= \frac{1}{d+1} \sum_{i=0}^d \delta_{e_i}. 
\]
Thus $\Lambda_0(\theta)=\log\left[ 1 + \sum_{i=1}^d e^{\theta_i}   \right] - \log(d+1)$. The corresponding optimal transport problem has appeared recently in \cite{PW14, P16, PW16} in connection with portfolio theory and information geometry. 
\end{exm}

The modern theory of optimal transport is a vast area with important applications in analysis, geometry, and probability. We refer the reader to the book \cite{AG13} for an introduction.

We now describe the solution to the optimization problem \eqref{eq:OTstatement} as done in \cite{GM96}. The optimal coupling can be described by an extension of the usual notion of concavity that we describe below. See, for example, \cite[Definition 1.8 and eqn. 1.3, Chapter 1]{AG13}.

\begin{defn}\label{defn:c-concave} A function $\psi:\rr^d\rightarrow \rr\cup \{-\infty\}$ is said to be $\Lambda_0$ concave if 
\[
\psi(x)=\inf_{y \in \rr^d}\left[ \Lambda_0(x-y) - \rho(y)  \right]
\]
for some $\rho:\rr^d \rightarrow \rr \cup \{-\infty\}$. The $\Lambda_0$ superdifferential of a function $\psi :\rr^d \mapsto \rr \cup \{-\infty\}$ at a point $\theta\in \rr^d$ is given by the set of points $(\theta, y)\in \rr^d \times \rr^d$ such that 
\[
\psi(v) \le \psi(\theta) + \Lambda_0(v-y) - \Lambda_0(\theta - y), \quad \text{for all $v \in \rr^d$}. 
\]
The set of $\Lambda_0$ superdifferential pairs $(\theta, y)$ of $\psi$ will be denoted by $\partial^{\Lambda_0}\psi$. In particular, $y \in \partial^{\Lambda_0}\psi(\theta)$ will mean $(\theta, y)\in \partial^{\Lambda_0}\psi$. 

Define the $\Lambda_0$-transform (or, conjugate) $\psi^{0}:\rr^d \rightarrow \rr \cup \{-\infty\}$ by 
\eq\label{eq:lambdaconj}
\psi^0(y)=\inf_{x \in \rr^d}\left[ \Lambda_0(x-y) - \psi(x)  \right].
\en
Then, $\psi^0$ is \textit{dual} $\Lambda_0$ concave, and $\psi(\theta)+\psi^0(y)=\Lambda_0(\theta - y)$ if and only if $(\theta,y)\in \partial^{\Lambda_0}\psi$. See \cite[Section 3.1]{PW16} for more details.
\end{defn}

The following theorem is taken from \cite[Theorem 1.13, Chapter 1]{AG13} where it is referred to as the \textit{Fundamental theorem of optimal transport}.  

\begin{thm} For a joint distribution $R\in \Pi(P,Q)$ to be optimal for the minimization problem \eqref{eq:OTstatement} it is sufficient that there exists a $\Lambda_0$ concave function $\psi$ such that $\max(\psi, 0)$ is $P$ integrable and the support of $R$ is a subset of $\partial^{\Lambda_0}\psi$. The above is also necessary if $\Lambda_0$ is bounded below. 
\end{thm}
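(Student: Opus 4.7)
The plan is to split the argument into two independent halves: a short sufficiency argument built from the pointwise conjugate inequality, and a longer necessity argument built on Kantorovich duality plus a double $\Lambda_0$-transform.

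For sufficiency, I would fix a $\Lambda_0$-concave $\psi$ with $\max(\psi,0) \in L^1(P)$ and a coupling $R \in \Pi(P,Q)$ whose support sits inside $\partial^{\Lambda_0}\psi$. By the last line of Definition~\ref{defn:c-concave}, on this support one has the pointwise identity $\Lambda_0(\theta - y) = \psi(\theta) + \psi^0(y)$, whereas the very definition of $\psi^0$ gives the reverse pointwise inequality $\psi(\theta) + \psi^0(y) \le \Lambda_0(\theta - y)$ everywhere. Integrating against $R$ and against an arbitrary competitor $R' \in \Pi(P,Q)$ and using the common marginals yields
\[
R'[\Lambda_0(\theta - y)] \;\ge\; P[\psi] + Q[\psi^0] \;=\; R[\Lambda_0(\theta - y)],
\]
which is exactly the optimality of $R$. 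A small preliminary check is that the sum $P[\psi] + Q[\psi^0]$ is not of the indeterminate form $\infty - \infty$: the integrability of $\max(\psi,0)$ rules out $P[\psi] = +\infty$, and the pointwise inequality together with the finiteness of $\Lambda_0$ everywhere controls the other extreme.

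For necessity I would invoke Kantorovich duality. Since $\dom(\Lambda_0) = \rr^d$, the convex $\Lambda_0$ is continuous, so $c(\theta,y) = \Lambda_0(\theta-y)$ is continuous and, by hypothesis, bounded below. These are precisely the conditions under which strong duality holds: the primal minimum in \eqref{eq:OTstatement} equals $\sup\{P[\phi] + Q[\eta] : \phi(\theta) + \eta(y) \le \Lambda_0(\theta-y)\}$, and the dual is attained by some pair $(\phi_*, \eta_*)$. The upgrade to a $\Lambda_0$-concave potential is the standard double-transform trick: define
\[
\widetilde{\eta}(\theta) := \inf_{y \in \rr^d}\bigl[\Lambda_0(\theta - y) - \eta_*(y)\bigr],
\]
which is $\Lambda_0$-concave by Definition~\ref{defn:c-concave} (with $\rho = \eta_*$) and satisfies $\widetilde{\eta} \ge \phi_*$ pointwise. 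Setting $\psi := \widetilde{\eta}$ and using the analogue of \eqref{eq:lambdaconj}, the pair $(\psi, \psi^0)$ is still dual feasible, satisfies $\psi^0 \ge \eta_*$, and hence is still dual optimal.

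The endgame is then to compare primal and dual optimal values. With $R$ primal optimal and $(\psi, \psi^0)$ dual optimal of equal value, the obvious computation forces
\[
\psi(\theta) + \psi^0(y) = \Lambda_0(\theta - y) \qquad R\text{-almost surely},
\]
which by the last assertion of Definition~\ref{defn:c-concave} is the statement $(\theta,y) \in \partial^{\Lambda_0}\psi$ on a set of full $R$-measure. Upper semicontinuity of $\psi$ and $\psi^0$ (each is an infimum of functions continuous in the respective free variable) makes the gap $\Lambda_0 - \psi - \psi^0$ lower semicontinuous, so its zero set is closed and the inclusion extends from an $R$-full-measure set to the full topological support of $R$. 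The main obstacle is the necessity direction, specifically the attainment of the dual maximum under only the hypothesis that $\Lambda_0$ is bounded below, and the careful bookkeeping of $\pm\infty$ values during the transform iteration; the sufficiency direction, by contrast, reduces to a one-line duality inequality.
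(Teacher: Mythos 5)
Your sufficiency half is essentially the standard one-line duality computation, and it is worth noting that the paper itself offers no proof of this theorem at all: it is quoted verbatim from \cite{AG13} (Theorem 1.13 there), so the relevant comparison is with that source's argument. Even in the sufficiency direction your $\infty-\infty$ bookkeeping is thinner than you concede: $\max(\psi,0)\in L^1(P)$ rules out $P[\psi]=+\infty$, but to exclude the bad configuration $Q[\max(\psi^0,0)]=+\infty$ together with $P[\psi]=-\infty$ you only have the bound $\psi^0(y)\le \Lambda_0(\theta_0-y)-\psi(\theta_0)$ for a fixed $\theta_0$, and the finite continuous majorant $\Lambda_0(\theta_0-\cdot)$ need not be $Q$-integrable. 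The cited theorem in fact carries the hypothesis $c(\theta,y)\le a(\theta)+b(y)$ with $a\in L^1(P)$, $b\in L^1(Q)$, which the paper's restatement suppresses and which is exactly what legitimizes the splitting $R'[\psi\oplus\psi^0]=P[\psi]+Q[\psi^0]$.

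The genuine gap is in necessity, and you flagged it yourself without closing it: attainment of the Kantorovich dual supremum does not follow from ``$\Lambda_0$ continuous and bounded below.'' Under those hypotheses one gets equality of the primal and dual \emph{values} (duality for lower semicontinuous costs bounded below on Polish spaces), but the dual supremum need not be attained; the known attainment theorems again require a majorization $c\le a\oplus b$ with integrable $a,b$, or at least finiteness of the optimal cost together with further conditions. Since your entire endgame --- forcing $\psi(\theta)+\psi^0(y)=\Lambda_0(\theta-y)$ $R$-almost surely by comparing equal optimal values --- is predicated on having an optimal dual pair $(\psi,\psi^0)$, the necessity direction as written is not established. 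The proof behind the paper's citation avoids dual attainment altogether: optimality of $R$ first yields $\Lambda_0$-cyclical monotonicity of $\mathrm{supp}(R)$, and then a Rockafellar--R\"uschendorf construction builds $\psi$ explicitly as an infimum over finite chains through a fixed $(\theta_0,y_0)\in\mathrm{supp}(R)$; this simultaneously gives $\Lambda_0$-concavity, $\mathrm{supp}(R)\subseteq\partial^{\Lambda_0}\psi$, and the pointwise bound $\psi(\theta)\le \Lambda_0(\theta-y_0)-\Lambda_0(\theta_0-y_0)$ from which $\max(\psi,0)\in L^1(P)$ is deduced under the suppressed integrability hypothesis. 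Your closing step upgrading ``$R$-a.s.'' to ``on $\mathrm{supp}(R)$'' is correct as far as it goes ($\psi$ and $\psi^0$ are infima of continuous functions, hence upper semicontinuous, so the gap $\Lambda_0-\psi-\psi^0$ is lower semicontinuous with closed zero set), but it sits downstream of the unproven attainment step, so the argument would need to be rerouted through cyclical monotonicity to be complete.
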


In our context, $\Lambda_0$ is bounded below by zero if the mean of $\mu_0$ is zero. 
\smallskip

Although these extended notions of concavity and superdifferentiability gives us a theoretical picture of the optimal coupling, they can be nonintuitive and do not satisfy useful inequalities satisfied by the superdifferentials of ordinary concave functions. However, in this article we show that they can all be embedded as superdifferentials of an actual concave function on the space of probability distributions that are absolutely continuous with respect to $\mu_0$.  

To understand the idea, one first needs to note that $\rr^d$ itself corresponds to a space of probability distributions; as parameters of the natural exponential family of probability distributions with base measure $\mu_0$. For example, $\theta \mapsto N(\theta, I)$ embeds $\rr^d$ as the mean of a normal distribution. This corresponds to Example \ref{exmp:gaussian}. Hence, if we consider the convex set of all probability distributions that are absolutely continuous with respect to $N(0, I)$, we have an embedding of $\rr^d$ into that set. What we show is that there exists a concave function on this convex set of probability measures whose superdifferentials (taken in the usual way) are also probability measures that are members of the same exponential family. The parameters of these superdifferentials recover the optimal coupling for problem \eqref{eq:OTstatement}. 

This idea is closely related to our work \cite{PW16} where we describe this result only for Example \ref{exmp:simplex}. It leads to a new information geometry based on the concept of $L$-divergence that extends the classical information geometry of Kullback-Leibler divergence (that corresponds to Example \ref{exmp:gaussian}). See \cite{A16} for an introduction to information geometry. I expect a similar consequence for this general construction.

\section{Preliminaries}

\subsection{Natural exponential families}

We now define the exponential family of models with base measure $\mu_0$. See \cite{A16} for more details and historical references. Let $M_1:=M_1\left(\rr^d\right)$ be the space of Borel probability measures on $\rr^d$. 

\begin{defn}\label{defn:exp}
For every $\theta\in \rr^d$, define $\mu_\theta$ by the exponential change of measure 
\[
\frac{d\mu_\theta}{d\mu_0}(x)= \exp\left( \iprod{\theta, x} - \Lambda_0(\theta) \right).
\]
Then $\mu_\theta \in M_1\left(  \rr^d \right)$ and the collection $\left\{ \mu_\theta,\; \theta \in \rr^d \right\}$ is called the natural exponential family of models with base measure $\mu_0$. 
\end{defn}

For example, when the base measure is the multidimensional standard normal, for any $\theta$, the probability measure $\mu_\theta$ is the multidimensional Gaussian law with mean $\theta$ and identity covariance. Now, consider the exponential family generated by $\mu_0$ in Example \ref{exmp:simplex}. For any $\theta\in \rr^d$, notice that $\mu_\theta$ is still supported on the discrete set $\left\{  e_i,\; i=0,1,\ldots, d \right\}$. The mass it puts on $e_i$ is proportional to $\exp\left( \iprod{\theta, e_i}  \right)$. Thus, 
\[
\mu_\theta\left(  e_i \right)= \frac{e^{\theta_i}}{1+ \sum_{i=1}^d e^{\theta_i}}, \; \text{for $i=1,2,\ldots, d$, and} \quad  \mu_\theta\left(  e_0 \right)= \frac{1}{1+ \sum_{i=1}^d e^{\theta_i}}.   
\]
As $\theta$ varies in $\rr^d$ the natural exponential family corresponds to the open unit simplex of dimension $d$ (i.e., with $(d+1)$ coordinates). 

\begin{lemma}\label{lem:compu1} For any $\theta, \gamma \in \rr^d$, we have $\log\mu_{\theta}\left( e^{\iprod{\gamma, x}}  \right)=\Lambda_0(\gamma+ \theta) - \Lambda_0(\theta)$. 
\end{lemma}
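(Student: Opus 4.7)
The plan is to unfold both sides via the defining formulas and see that they collapse to the same quantity. This is a direct computation using the Radon-Nikodym derivative from Definition \ref{defn:exp} together with the defining identity $\mu_0(e^{\iprod{\eta,x}}) = e^{\Lambda_0(\eta)}$, valid for all $\eta \in \rr^d$ since $\dom(\Lambda_0) = \rr^d$ by assumption.

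First, I would rewrite the expectation of $e^{\iprod{\gamma,x}}$ under $\mu_\theta$ as an expectation under $\mu_0$ by inserting the density:
\[
\mu_\theta\bigl(e^{\iprod{\gamma,x}}\bigr) = \mu_0\!\left( e^{\iprod{\gamma,x}} \cdot e^{\iprod{\theta,x} - \Lambda_0(\theta)} \right) = e^{-\Lambda_0(\theta)} \, \mu_0\bigl( e^{\iprod{\gamma+\theta,\, x}} \bigr).
\]
Then I would apply the definition of $\Lambda_0$ at the point $\gamma + \theta$ to rewrite $\mu_0(e^{\iprod{\gamma+\theta, x}}) = e^{\Lambda_0(\gamma+\theta)}$. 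Combining the two factors yields
\[
\mu_\theta\bigl(e^{\iprod{\gamma,x}}\bigr) = e^{\Lambda_0(\gamma+\theta) - \Lambda_0(\theta)},
\]
and taking logarithms gives the claim.

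There is essentially no obstacle here: the full-domain hypothesis $\dom(\Lambda_0) = \rr^d$ guarantees that $\mu_0(e^{\iprod{\gamma+\theta,x}})$ is finite for every $\gamma, \theta$, so all manipulations are justified by Fubini/linearity of the integral. The lemma should really be read as the statement that the natural exponential family is closed under the operation of tilting, with cumulant generating functions that shift additively — a fact I would flag in one sentence after the computation, since it will be used later to identify cumulant generating functions of the tilted measures $\mu_\theta$.
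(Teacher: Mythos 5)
Your proof is correct and is essentially identical to the paper's one-line argument: both rewrite $\mu_\theta\bigl(e^{\iprod{\gamma,x}}\bigr)$ as a $\mu_0$-expectation via the change of measure $d\mu_\theta/d\mu_0 = \exp\bigl(\iprod{\theta,x} - \Lambda_0(\theta)\bigr)$ and then apply the definition of $\Lambda_0$ at $\gamma+\theta$. You simply spell out the intermediate steps and the finiteness justification that the paper leaves implicit.
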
 

\begin{proof}
By a change of measure, $\log\mu_{\theta}\left( e^{\iprod{\gamma, x}}  \right)=\log \mu_0\left( e^{\iprod{\theta+\gamma,x}}  \right)- \Lambda_0(\theta)$. 
\end{proof}

\subsection{Topological preliminaries} To perform convex analysis on the infinite dimensional space of probability measures we require a proper locally convex topological vector space (LCTVS). Let $M\left( \rr^d\right)$ be the space of all finite signed measures on $\rr^d$ which have finite exponential moments of all orders. For $\nu \in M\left( \rr^d  \right)$, we will follow the usual notation of denoting by $\nu^+, \nu^-$, and $\abs{\nu}$, the positive part, the negative part, and the absolute value (or, variation) of the signed measure $\nu$. For $i \in [d]:=\{1,2,\ldots,d\}$ and $j\in \NN \cup \{0\}$, define the set of functions from $M_0\left( \rr^d \right)$:
\[
p^{(i)}_j(\nu)= \abs{\nu}\left(  e^{j \abs{x_i}} \right), \quad \nu \in \VS.
\]
Consider the vector space $\VS$ of elements $\nu$ in $M\left(  \rr^d \right)$ such that $p^{(i)}_j(\nu) < \infty$ for every $i \in [d]$ and $j \in \NN$. We now define a locally convex topology on $\VS$.  

\begin{lemma}
The family $\left( p^{(i)}_j,\; i\in [d], j\ge 0  \right)$ is a collection of seminorms that is total. Hence, it induces a locally convex Hausdorff metrizable topology on $\VS$. In this topology, we have $\lim_{n \rightarrow \infty} \nu_n = \nu$ if and only if 
\eq\label{eq:cvgexp}
\lim_{n \rightarrow \infty} \abs{\nu_n}\left(  e^{\iprod{\theta, x}}   \right) = \abs{\nu}\left(  e^{\iprod{\theta, x}}   \right), \quad \forall \; \theta \in \rr^d. 
\en 
Hence, when each $\nu_n$ is a probability measure, convergence in this topology is equivalent to the convergence of all exponential moments. 
\end{lemma}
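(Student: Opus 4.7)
The plan is to verify the seminorm axioms and totality, invoke the standard Fréchet construction to obtain metrizability, and then establish the convergence characterization via a Jensen-type comparison inequality between exponential moments and the seminorms.

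First, each $p^{(i)}_j$ inherits positive homogeneity and the triangle inequality from the corresponding properties of the total variation, namely $|\alpha\nu|=|\alpha||\nu|$ and $|\mu+\nu|\le|\mu|+|\nu|$ as set functions on Borel sets. Totality is immediate because $p^{(i)}_0(\nu)=|\nu|(\rr^d)$ is the total variation norm, and its vanishing forces $\nu=0$. A countable total family of seminorms always defines a Hausdorff locally convex topology, metrizable by a standard translation-invariant Fréchet metric of the form $d(\mu,\nu)=\sum_n 2^{-n}\frac{p_n(\mu-\nu)}{1+p_n(\mu-\nu)}$ after enumerating the seminorms as $(p_n)_{n\ge 1}$; I would just cite this step rather than unpack it.

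The substantive piece is the convergence characterization. The key estimate I would use is
\[
e^{\iprod{\theta,x}} \le e^{M\sum_i|x_i|} \le \frac{1}{d}\sum_{i=1}^d e^{dM|x_i|}, \qquad M:=\max_i|\theta_i|,
\]
where the second inequality is Jensen's applied to the convex function $t\mapsto e^{dt}$. Picking an integer $j\ge dM$ then yields the basic comparison $|\mu|(e^{\iprod{\theta,x}})\le \frac{1}{d}\sum_i p^{(i)}_j(\mu)$ valid for every $\mu\in\VS$. Applied to $\mu=\nu_n-\nu$, convergence in the topology forces $|\nu_n-\nu|(e^{\iprod{\theta,x}})\to 0$ for every $\theta$; combined with the pointwise measure bounds $|\nu_n|\le|\nu|+|\nu_n-\nu|$ and $|\nu|\le|\nu_n|+|\nu_n-\nu|$ tested against the non-negative function $e^{\iprod{\theta,x}}$, this produces \eqref{eq:cvgexp}. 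Conversely, the elementary inequality $e^{j|x_i|}\le e^{jx_i}+e^{-jx_i}$ lets one recover each $p^{(i)}_j(\nu_n-\nu)$ from values of $|\nu_n-\nu|(e^{\iprod{\theta,x}})$ at the axial directions $\theta=\pm je_i$.

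The step I expect to require the most care is the reverse implication in \eqref{eq:cvgexp}: it is most naturally proved via the variation $|\nu_n-\nu|$ of the difference, whereas the hypothesis as written concerns $|\nu_n|$ and $|\nu|$ separately, and these are genuinely weaker (for instance $\nu_n=\delta_{1/n}$, $\nu=\delta_0$ satisfy the latter but have $p^{(1)}_j(\nu_n-\nu)\not\to 0$). For probability measures, however, $|\nu_n|=\nu_n$ and $|\nu|=\nu$, so the final clause of the lemma follows from the comparison inequality above once one interprets convergence of all exponential moments in the variation-of-the-difference sense that matches the seminorm topology.
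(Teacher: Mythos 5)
Your argument is correct and follows essentially the same route as the paper's: both directions rest on the same pair of dominating inequalities (control of $e^{\iprod{\theta,x}}$ by $\sum_i e^{j\abs{x_i}}$ for a suitable integer $j$, and recovery of $e^{j\abs{x_i}}$ from the axial directions $\theta=\pm je_i$ via $e^{j\abs{x_i}}\le e^{jx_i}+e^{-jx_i}$), together with the standard Fr\'echet metrization of a countable total family of seminorms. Whether one takes $j=\lceil \norm{\theta}_1\rceil$ with $e^{\iprod{\theta,x}}\le e^{j\max_i \abs{x_i}}\le \sum_i e^{j\abs{x_i}}$, as the paper does, or your Jensen variant with $j\ge d\max_i\abs{\theta_i}$, is immaterial; likewise your totality argument via $p^{(i)}_0(\nu)=\abs{\nu}(\rr^d)$ is equivalent to the paper's observation that each $p^{(i)}_j$ is already a norm. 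Where you differ is in rigor, and your added care is warranted: in the forward direction the paper simply asserts that $\nu\mapsto\abs{\nu}\left(e^{\iprod{\theta,x}}\right)$ is continuous, but this functional is not linear, and the missing justification is exactly the estimate you supply, $\bigl\lvert\abs{\mu}(f)-\abs{\nu}(f)\bigr\rvert\le\abs{\mu-\nu}(f)$ for $f\ge 0$, coming from $\abs{\mu}\le\abs{\nu}+\abs{\mu-\nu}$ as measures.

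More substantively, your caveat about the reverse direction identifies a genuine defect in the lemma as stated, not a gap in your proof. Condition \eqref{eq:cvgexp} as written compares $\abs{\nu_n}$ and $\abs{\nu}$ separately, and your counterexample $\nu_n=\delta_{1/n}$, $\nu=\delta_0$ is valid: every exponential moment converges, yet $p^{(1)}_j(\nu_n-\nu)=e^{j/n}+1\to 2$, so convergence in $\topo$ fails. Note that since these are probability measures, the same example also defeats the final clause of the lemma under its literal reading; your closing remark that $\abs{\nu_n}=\nu_n$ for probabilities does not by itself repair this, because the relevant quantity is still $\abs{\nu_n-\nu}$, and the only fix is the one you name, namely reading the hypothesis as $\abs{\nu_n-\nu}\left(e^{\iprod{\theta,x}}\right)\to 0$ for all $\theta$. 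The paper's own one-line proof of the \emph{if} part (``whereby convergence of the right side gives convergence of the left'') silently makes precisely this conflation: the axial inequality closes the argument only when tested against the variation of the difference. So your proposal proves the corrected equivalence, which is in fact the version used downstream (e.g., the monotone-approximation step in the proof of Lemma \ref{lem:supdiff} establishes convergence of $\abs{\mu_{g_i}-\mu_g}$-integrals), and your review of the statement is a substantive improvement over the paper's proof rather than a deviation from it.
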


\begin{proof} It is obvious that every $p_j^{(i)}$ is a seminorm. In fact, it is a norm, and thus, the family is total. Therefore, there is a corresponding locally convex Hausdorff topology  which is the smallest topology under which each $p_j^{(i)}$ is continuous. The topology is metrizable since the family of seminorms is countable.  

We now show \eqref{eq:cvgexp}. To see the \textit{only if} part, consider some $\theta\in \rr^d$ and let $j:= \lceil \norm{\theta}_1 \rceil$. Then 
\[
0\le e^{\iprod{\theta, x}} \le e^{j \max_i \abs{x_i} } \le \sum_{i=1}^d e^{j \abs{x}_i}.
\]
Thus the function $\nu \mapsto \abs{\nu}\left( e^{\iprod{\theta, x}}  \right)$ is continuous in the locally convex topology constructed above. which gives us the \textit{only if} part. 

For the \textit{if} part, fix $i \in [d]$ and $j \ge 0$, and define $\theta^+=j e_i$ and $\theta^-= -j e_{i}$. Now, 
\[
e^{j \abs{x}_i}  \le e^{\iprod{\theta^+, x}} + e^{\iprod{\theta^-, x}},
\]
whereby convergence of the right side gives convergence of the left. 
\end{proof}

We shall call this topology $\topo$. Consider the dual space $\VS^*$ of all linear $\topo$ continuous functions on $\VS$. Equip $\VS^*$ with the weak* topology $\topo^*$. This gives us a pair of locally convex topological vector spaces $\left(\VS, \VS^*\right)$ such that if $\nu \in \VS$ and $h \in \VS^*$ then the bilinear function $\iprod{h,\nu}:=\nu(h)$ is $\topo$ continuous on $\VS$ and $\topo^*$ continuous on $\VS^*$. 

It follows from \eqref{eq:cvgexp} that the function $x\mapsto e^{\iprod{\theta, x}}$, for any $\theta\in \rr^d$, can be thought of as an element in $\VS^*$. We will use this identification without further remark.

\section{Exponentially concave functions and $\Lambda_0$-concave functions} 

Let $\Omega$ be a convex subset of $M_1$. We generalize the definition of exponentially concave functions from \cite{P16, PW16} where the reader can find more references and applications to various other fields.

\begin{defn}
A function $\varphi:\Omega \rightarrow \rr \cup \{ -\infty\}$ will be called exponentially concave if $\exp\left( \varphi \right)$ is a nonnegative concave function on the convex set $\Omega$. In particular, $\varphi$ is itself concave. 
\end{defn}

Consider the base measure $\mu_0$ and let $\Omega_0$ denote the convex set of probability measures on $\rr^d$ that are absolutely continuous with respect to $\mu_0$. Let $\varphi$ denote an exponentially concave function on $\Omega_0$. We will throughout assume that $\varphi$ is proper and is continuous in the interior of its effective domain, which is non-empty. By \cite[Proposition 5.2]{ET76}, at any $\mu$ is the interior of the effective domain, the set of superdifferentials $\partial \varphi(\mu)$ is non-empty. Consider one such point $\mu \in \Omega_0$ and let $\mu^*\in \VS^*$ be an element in the superdifferential $\partial \varphi(\mu)$. 

Note that, since $\Omega_0$ is a set of probability measures, the set of superdifferentials is closed under addition by a constant. Thus if we replace $\mu^*$ by $\widetilde{\mu^*}:= \mu^* +1 - \iprod{\mu^*, \mu}$, then the latter is an element in $\partial \varphi(\mu)$ and satisfies $\iprod{\widetilde{\mu^*}, \mu}=1$. To keep our notations simple, we will assume that $\iprod{\mu^*, \mu}=1$. We will throughout use this normalization. 

Our next lemma shows that supergradients of exponential concave functions can be expressed in terms of a probability measure.  This generalizes \cite[Proposition 5]{PW14} where it is shown for the unit simplex (Example \ref{exmp:simplex}).

\begin{lemma}\label{lem:supdiff}
There exists an element $\pi=\pi(\mu) \in \Omega_0$ that induces $\mu^*$ in the following way:
\eq\label{eq:subgradprob}
\iprod{\mu^*, \nu}= \pi\left( \frac{d\nu}{d\mu}  \right), \quad \text{for all $\nu \ll \mu$}. 
\en
\end{lemma}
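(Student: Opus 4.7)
The strategy is to first show that $\mu^*$ is a \emph{positive} functional on $\Omega_0$, then to construct $\pi$ as the Riesz-type representing measure of this positive functional, and finally to verify the representation \eqref{eq:subgradprob}.

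For positivity, I fix any $\nu \in \Omega_0$ in the effective domain of $\varphi$ and set $\mu_t := (1-t)\mu + t\nu$ for $t \in [0,1]$. Since $e^\varphi$ is nonnegative and concave on $\Omega_0$, I get $e^{\varphi(\mu_t)} \ge (1-t) e^{\varphi(\mu)} + t e^{\varphi(\nu)} \ge (1-t) e^{\varphi(\mu)}$, which after taking logarithms yields
\[
\varphi(\mu_t) - \varphi(\mu) \ge \log(1-t).
\]
Combining this with the supergradient upper bound $\varphi(\mu_t) - \varphi(\mu) \le t\iprod{\mu^*, \nu - \mu} = t(\iprod{\mu^*, \nu} - 1)$ (using the normalization $\iprod{\mu^*,\mu}=1$), dividing by $t > 0$, and letting $t \downarrow 0$ so that $\log(1-t)/t \to -1$, I conclude $\iprod{\mu^*, \nu} \ge 0$.

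With this in hand, I define $\pi(A) := \iprod{\mu^*, 1_A \cdot \mu}$ for each Borel set $A\subset\rr^d$. Since $\mu\in\VS$ and $|1_A\cdot\mu|\le\mu$, the measure $1_A\cdot\mu$ lies in $\VS$, so $\pi(A)$ is well-defined. Nonnegativity of $\pi(A)$ follows from the positivity step applied to the probability measure $\mu(A)^{-1} 1_A\cdot\mu\in\Omega_0$ (in the case $\mu(A)>0$; the other case is trivial). Countable additivity comes from $\topo$-continuity of $\mu^*$: if $A=\sqcup_n A_n$ and $B_N=\bigcup_{n\le N}A_n$, then $|1_A\cdot\mu - 1_{B_N}\cdot\mu|(e^{\iprod{\theta,x}}) = \int_{A\setminus B_N} e^{\iprod{\theta,x}}\, d\mu \to 0$ by dominated convergence, so $1_{B_N}\cdot\mu \to 1_A\cdot\mu$ in $\topo$. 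Since $\pi(\rr^d)=\iprod{\mu^*,\mu}=1$ and $\mu_0(A)=0$ forces $\mu(A)=0$ and hence $\pi(A)=0$, I conclude $\pi\in\Omega_0$.

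Finally, the representation \eqref{eq:subgradprob} holds for indicators $f=1_A$ by construction, extends to simple and then to bounded measurable $f$ via linearity and $\topo$-continuity, and reaches a general density $f=d\nu/d\mu$ by approximating $\nu$ through the truncations $\nu_n := (f\wedge n)\cdot\mu$; these converge to $\nu$ in $\topo$ precisely because $\nu\in\VS$ has all exponential moments, so passing to the limit on both sides yields $\iprod{\mu^*,\nu}=\pi(f)$. The main obstacle I anticipate is this last truncation step: one must carefully handle the potential unboundedness of $d\nu/d\mu$ while staying inside the locally convex topology $\topo$, which is where the tacit requirement that $\nu$ itself lies in $\VS$ becomes essential.
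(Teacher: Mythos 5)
Your proof is correct, but the construction of $\pi$ takes a genuinely different route from the paper's. The positivity step is essentially the paper's argument in mild disguise: the paper works with $\Phi=\exp(\varphi)$ and the rescaled supergradient $\Phi(\mu)\mu^*\in\partial\Phi(\mu)$, sandwiching $-\Phi(\mu)\le t\Phi(\mu)\iprod{\mu^*,\nu-\mu}$ and letting $t\uparrow 1$, while you sandwich the chord bound $\varphi(\mu_t)-\varphi(\mu)\ge\log(1-t)$ against the supergradient inequality for $\varphi$ and let $t\downarrow 0$; the two are equivalent. (One small repair: drop your restriction to $\nu$ in the effective domain of $\varphi$ --- your chord bound only uses $e^{\varphi(\nu)}\ge 0$, so it holds for all $\nu\in\Omega_0$, and you in fact need this generality later when you apply positivity to $\mu(A)^{-1}1_A\cdot\mu$, which need not lie in the effective domain.) Where you genuinely diverge is the representation: the paper defines $\Gamma(g)=\iprod{\mu^*,\mu_g}$ on continuous functions supported on balls $B_K$, invokes the Riesz representation theorem for each $K$, patches the resulting measures $\pi_K$ by consistency, and then extends to Borel densities by ``uniform continuous approximation'' and monotone convergence; you instead define the set function $\pi(A):=\iprod{\mu^*,1_A\cdot\mu}$ outright and verify countable additivity from $\topo$-continuity of $\mu^*$ plus dominated convergence, then climb through simple, bounded, and truncated densities. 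Your route buys two things: it avoids the Riesz machinery and the consistency-over-$K$ patching entirely, and it quietly repairs the weakest step of the paper's proof --- a bounded Borel $g$ cannot in general be uniformly approximated by continuous functions, so the paper's extension to non-continuous densities really needs a monotone-class or simple-function argument of exactly the kind you supply. The cost is that you must check $\sigma$-additivity by hand, which the Riesz theorem gives for free. Your closing observation that $\nu\in\VS$ (finiteness of all exponential moments of $\nu$) is tacitly required for both the pairing $\iprod{\mu^*,\nu}$ and the truncation limit is accurate; the paper makes the same implicit assumption without comment.
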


We will denote $\mu^*$ by $d\pi/d\mu$ in view of the above lemma. Consistent with the terminology developed in \cite{PW14, PW16} we will call the map $\pi:\Omega_0 \rightarrow \Omega_0$ to be a portfolio map generated by $\varphi$.

\begin{proof}[Proof of Lemma \ref{lem:supdiff}] Let $\Phi=\exp\left( \varphi \right)$. Then $\Phi$ is a nonnegative concave function on $\Omega_0$ and $\Phi(\mu)\mu^* \in \partial \Phi(\mu)$. Consider $\nu \in \Omega_0$. Since $\Phi \ge 0$ and concave, for $0< t < 1$, we get 
\eq\label{eq:nonneg1}
\begin{split}
-\Phi(\mu) &\le \Phi\left( \mu + t (\nu- \mu)   \right) - \Phi(\mu)\le \iprod{\Phi(\mu) \mu^*, t (\nu-\mu)}= t \Phi(\mu) \iprod{\mu^*, \nu - \mu}.
\end{split}
\en
Since $\Phi(\mu) > 0$, we divide both sides by it and take $t \uparrow 1$ to get 
\eq\label{eq:nonneg}
\iprod{\mu^*, \nu} \ge \iprod{\mu^*, \mu} - 1=0, 
\en
due to our chosen normalization. 

Consider the space of continuous functions supported on $B_K:=\{x:\; \norm{x} \le K \}$. Let $g$ be one such function such that $g \ge 0$ and $\mu(g)=1$. Then define $\mu_g \in \Omega_0$ by the change of measure $d\mu_g/d\mu= g$. Consider the map $g \mapsto \Gamma(g):=\iprod{\mu^*, \mu_g}$. It can be extended to all continuous $g$ supported on $B_K$ by using linearity. Since all our functions are supported on $B_K$, uniform convergence of functions imply convergence in the $\topo$ topology for the corresponding sequence of probability measures. Thus $\Gamma$ is a continuous map in the uniform topology that takes nonnegative functions to nonnegative values by \eqref{eq:nonneg}. By the Riesz representation theorem, there exists a nonnegative measure $\pi_K$ such that $\Gamma(g)=\pi_K(g)$. Consistency over $K$ gives us a $\pi$ whose restriction to $B_K$ is $\pi_K$. That $\pi$ is a probability is due to our normalization. This shows \eqref{eq:subgradprob} for $\nu=\mu_g$ for all bounded continuous $g$. 
Generalization to non-continuous $g$ with bounded support follows by uniform continuous approximation. 
If $g$ has unbounded support, consider a collection of increasing Borel maps $g_i,\; i\in \NN$, such that $\lim_{i\rightarrow\infty} g_i=g$. By monotone convergence theorem, $\lim_{i \rightarrow \infty} \mu_{g_i}= \mu_g$ in the $\topo$ topology.  
This proves \eqref{eq:subgradprob} for all $\nu \ll \mu$. 
\end{proof}

We now consider a special class of exponentially concave functions. Let $h:\rr^d \rightarrow [-\infty, \infty)$ be a measurable function. Define $\varphi$ on $\Omega_0$ by 
\eq\label{eq:expcvexp}
\varphi(\mu)=\inf_{\theta \in \rr^d}\left[ \log  \mu\left(  e^{\iprod{-\theta, x}} \right) - h(\theta)  \right], \qquad \mu \in \Omega_0. 
\en
Clearly, $\varphi$ is exponentially concave. Our next result connects exponentially concave functions with $\Lambda_0$ concave functions. Recall the natural exponential family with base measure $\mu_0$. Notice that the natural exponential family with base measure $\mu_0$ is a subset of $\Omega_0$. 

\comment{
Consider $\pi(\cdot)$ from Lemma \ref{lem:supdiff}. Define $\pi_\theta:= \pi_{\mu_\theta}$ and recall that $\pi_\theta(x)$ is the mean under $\pi_\theta$.  
}

\begin{thm}\label{thm:expcnvtocost} Consider $\varphi$ from \eqref{eq:expcvexp}. Define the function $\psi: \rr^d \rightarrow \rr$ by
\eq\label{eq:l0cv}
\psi(\alpha)= \varphi(\mu_\alpha) + \Lambda_0(\alpha), \quad \alpha \in \rr^d.
\en
Then $\psi(\cdot)$ is $\Lambda_0$ concave on $\rr^d$. Conversely, suppose that a $\Lambda_0$ concave function $\psi$ is given. Let $\psi^0$ denote its $\Lambda_0$ conjugate. Define an exponentially concave function on $\Omega_0$ by 
\eq\label{eq:defineexpcnv}
\varphi(\mu):= \inf_{\theta \in \rr^d}\left[  \log \mu\left( e^{-\iprod{\theta, x}}\right) - \psi^0(\theta)  \right], \qquad \mu \in \Omega_0. 
\en
Then, for all $\alpha\in \rr^d$, we have 
\eq\label{lem:phiatalpha} 
\varphi(\mu_\alpha)=\psi(\alpha) - \Lambda_0(\alpha).
\en
\end{thm}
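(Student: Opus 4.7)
The plan is a short computation that converts the expectations appearing in $\varphi(\mu_\alpha)$ into quantities on $\rr^d$ by means of Lemma \ref{lem:compu1}, after which both parts reduce to bookkeeping with the definitions in \ref{defn:c-concave}. Concretely, setting $\theta_{\text{lem}}=\alpha$ and $\gamma_{\text{lem}}=-\theta$ in Lemma \ref{lem:compu1} gives the identity
\[
\log \mu_\alpha\bigl(e^{-\iprod{\theta,x}}\bigr)=\Lambda_0(\alpha-\theta)-\Lambda_0(\alpha),\qquad \alpha,\theta\in\rr^d,
\]
which is the one calculation powering everything that follows.

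For the forward direction, I would substitute the above identity into the definition \eqref{eq:expcvexp} evaluated at $\mu=\mu_\alpha$. The constant $-\Lambda_0(\alpha)$ pulls out of the infimum over $\theta$, giving
\[
\varphi(\mu_\alpha)=-\Lambda_0(\alpha)+\inf_{\theta\in\rr^d}\bigl[\Lambda_0(\alpha-\theta)-h(\theta)\bigr].
\]
Adding $\Lambda_0(\alpha)$ to both sides produces $\psi(\alpha)=\inf_\theta\bigl[\Lambda_0(\alpha-\theta)-h(\theta)\bigr]$, which is exactly the form required by Definition \ref{defn:c-concave} with $\rho=h$. Hence $\psi$ is $\Lambda_0$ concave.

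For the converse, the same substitution applied to \eqref{eq:defineexpcnv} yields
\[
\varphi(\mu_\alpha)=-\Lambda_0(\alpha)+\inf_{\theta\in\rr^d}\bigl[\Lambda_0(\alpha-\theta)-\psi^0(\theta)\bigr].
\]
To finish, I need to recognize the remaining infimum as $\psi(\alpha)$. This is precisely the biconjugation property of $\Lambda_0$-concave functions noted after equation \eqref{eq:lambdaconj} (and laid out in \cite[Section 3.1]{PW16}): because $\psi$ is $\Lambda_0$ concave, applying the $\Lambda_0$-transform to $\psi^0$ recovers $\psi$, i.e. $\psi(\alpha)=\inf_\theta\bigl[\Lambda_0(\alpha-\theta)-\psi^0(\theta)\bigr]$. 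Substituting this back gives \eqref{lem:phiatalpha}.

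The computation is entirely mechanical; there is no real obstacle, only the need to invoke biconjugation for the converse. If one wished to be self-contained, the $\le$ direction of biconjugation is immediate from the definition of $\psi^0$ via the inequality $\Lambda_0(\alpha-\theta)-\psi^0(\theta)\ge\psi(\alpha)$ valid for every $\theta$, while the matching $\ge$ direction uses the fact that $\psi$ itself is represented as $\inf_\theta[\Lambda_0(\alpha-\theta)-\rho(\theta)]$ for some $\rho$, together with the inequality $\rho\le\psi^0$ coming from $\psi\le\Lambda_0(\cdot-\theta)-\rho(\theta)$. These considerations, if needed, can be appended in a sentence or two, but given the explicit reference to \cite{PW16} the proof can be kept essentially to the display above.
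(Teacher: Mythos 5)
Your proof is correct and follows the paper's argument essentially verbatim: both reduce everything to the single computation $\log \mu_\alpha\left(e^{-\iprod{\theta,x}}\right)=\Lambda_0(\alpha-\theta)-\Lambda_0(\alpha)$ from Lemma \ref{lem:compu1}, read off the forward direction from Definition \ref{defn:c-concave} with $\rho=h$, and obtain the converse by substituting $\psi^0$ for $h$. Your only addition is spelling out the biconjugation identity $\psi(\alpha)=\inf_\theta\left[\Lambda_0(\alpha-\theta)-\psi^0(\theta)\right]$, which the paper leaves implicit in its one-line appeal to \eqref{eq:valueatalpha}; your sketch of it is sound.
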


\comment{
Consider the function $\omap\left(\theta\right) := \theta - \left(\nabla \Lambda_0\right)^{-1}\left( \pi_\theta\left(x\right)\right)$. Then the map $\theta \mapsto \omap(\theta)$ is a $\Lambda_{0}$ cyclically monotone map on $\rr^d$.}

\begin{proof} The first part follows by direct evaluation. For any $\alpha \in \rr^d$, we have 
\eq\label{eq:valueatalpha}
\begin{split}
\varphi(\mu_\alpha)&:= \inf_{\theta \in \rr^d} \left[  \log \mu_\alpha\left( e^{-\iprod{\theta,x}}\right) - h(\theta) \right]\\
&=\inf_{\theta \in \rr^d} \left[  \Lambda_0(\alpha - \theta) - h(\theta) \right] - \Lambda_0(\alpha). 
\end{split}
\en
Thus $\psi(\alpha)=\varphi(\mu_\alpha) + \Lambda_0(\alpha)$ satisfies the representation in Definition \ref{defn:c-concave}. 
The converse is a consequence of \eqref{eq:valueatalpha} replacing $h$ by $\psi^0$.
\end{proof}

We now show that the $\Lambda_0$ gradient of $\psi$ is related to the portfolio map of $\varphi$. Let $(V, V^*)$ be any pair of LCTVS and its topological dual. For each $\theta$ in some collection $\Theta$, let $f_\theta: V \rightarrow \rbar$ be a proper concave function. Consider the proper concave function $F: V \rightarrow \rbar$ given by $F(v):= \inf_{\theta \in \Theta} f_\theta(v)$. Let $\partial f(u)$ refer to the set of superdifferentials of a concave function $f$ at a point $u$. 

\begin{lemma}\label{lem:diffident}
For $v_0 \in \dom(F)$, $\partial f_\theta(v_0)\subseteq \partial F(v_0)$, for any $\theta$ such that $F(v_0)= f_\theta(v_0)$.   
\end{lemma}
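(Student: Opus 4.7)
The plan is to unfold the two relevant definitions and chain three inequalities. Let $v^* \in \partial f_\theta(v_0)$, where $\theta$ is chosen so that $F(v_0) = f_\theta(v_0)$; the goal is to verify that $v^*$ lies in $\partial F(v_0)$, i.e., that for every $v \in V$,
\[
F(v) \le F(v_0) + \iprod{v^*, v - v_0}.
\]

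First, since $v^*$ is a supergradient of $f_\theta$ at $v_0$, the concavity-type inequality
\[
f_\theta(v) \le f_\theta(v_0) + \iprod{v^*, v - v_0}
\]
holds for every $v \in V$. Second, by the very definition of $F$ as a pointwise infimum, $F(v) \le f_\theta(v)$ for every $v$. Third, the hypothesis $F(v_0) = f_\theta(v_0)$ lets us replace the right-hand side's $f_\theta(v_0)$ by $F(v_0)$. Stringing these three facts together yields
\[
F(v) \le f_\theta(v) \le f_\theta(v_0) + \iprod{v^*, v - v_0} = F(v_0) + \iprod{v^*, v - v_0},
\]
which is exactly the statement that $v^* \in \partial F(v_0)$. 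Since $v^*$ was an arbitrary element of $\partial f_\theta(v_0)$, this gives the desired inclusion.

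There is essentially no technical obstacle: properness of $F$ at $v_0$ is already guaranteed by $v_0 \in \dom(F)$, and no appeal to the LCTVS structure beyond the pairing $\iprod{\cdot,\cdot}$ is needed. The only subtlety worth flagging is that the result is one-sided; the reverse inclusion $\partial F(v_0) \subseteq \partial f_\theta(v_0)$ is generally false, since $F$ may be strictly less concave than any individual $f_\theta$ away from $v_0$. Accordingly, the proof should be presented as a short three-line computation and no more.
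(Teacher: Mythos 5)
Your proof is correct and is essentially identical to the paper's own argument: both chain the supergradient inequality for $f_\theta$ with the pointwise bound $F \le f_\theta$ and the equality $F(v_0) = f_\theta(v_0)$. Nothing further is needed.
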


\begin{proof} The proof is trivial. Pick any $v_0^*\in \partial f_\theta(v_0)$ and any other $u \in V$. Then, by the superdifferentiability of $f_\theta$ we get 
\[
F(u) \le f_\theta(u) \le f_{\theta}(v_0) + \iprod{v_0^*, u-v_0} = F(v_0) + \iprod{v_0^*,u-v_0}.
\]
Varying $u\in V$ proves the claim. 
\end{proof}

\begin{thm}\label{thm:cakederiv}
Consider the exponentially concave function \eqref{eq:defineexpcnv}. For $\alpha \in \rr^d$, let $\theta$ be an element in $\rr^d$ such that $\left( \alpha, \theta  \right) \in \partial^{\Lambda_0}\psi$. Then, there exists an element $\mu_\alpha^* \in \VS^*$ such that $\mu^*_\alpha$ is a supergradient of $\varphi$ at $\mu_\alpha$ and for any $\nu \in \VS$, we have $\iprod{\mu_\alpha^*, \nu}=\nu\left( h_\alpha(x) \right)$, where 
\[
h_\alpha(x):=\exp\left(  -\iprod{\theta, x} - \Lambda_0(\alpha- \theta) + \Lambda_0(\alpha)    \right). 
\]
In particular, the probability measure $\pi_\alpha:=\pi\left( \mu_\alpha\right)$ in Lemma \ref{lem:supdiff} is given by $\mu_{\alpha- \theta}$. 
\end{thm}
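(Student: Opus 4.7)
The plan is to combine Lemma \ref{lem:diffident} with a direct computation of the supergradient of $\log \mu(e^{-\iprod{\theta,x}})$ at $\mu = \mu_\alpha$. For each $\theta \in \rr^d$, set
\[
f_\theta(\mu) := \log \mu\!\left( e^{-\iprod{\theta,x}}  \right) - \psi^0(\theta), \qquad \mu \in \Omega_0,
\]
so that $\varphi(\mu) = \inf_{\theta} f_\theta(\mu)$. Each $f_\theta$ is concave in $\mu$ (composition of $\log$ with the $\topo$-continuous linear functional $L_\theta(\mu) := \mu(e^{-\iprod{\theta,x}})$), so we are in the situation of Lemma \ref{lem:diffident}.

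First I would verify that the infimum defining $\varphi(\mu_\alpha)$ is attained at our chosen $\theta$. Specializing \eqref{eq:valueatalpha} (with $h$ replaced by $\psi^0$) gives
\[
f_\theta(\mu_\alpha) = \Lambda_0(\alpha - \theta) - \psi^0(\theta) - \Lambda_0(\alpha),
\]
and $\varphi(\mu_\alpha) = \psi(\alpha) - \Lambda_0(\alpha)$ by \eqref{lem:phiatalpha}. Since $(\alpha,\theta) \in \partial^{\Lambda_0}\psi$ is equivalent to $\psi(\alpha) + \psi^0(\theta) = \Lambda_0(\alpha - \theta)$ (Definition \ref{defn:c-concave}), the two quantities agree, so Lemma \ref{lem:diffident} gives $\partial f_\theta(\mu_\alpha) \subseteq \partial \varphi(\mu_\alpha)$.

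Next I would produce an explicit element of $\partial f_\theta(\mu_\alpha)$. Using the elementary inequality $\log y \le \log y_0 + (y - y_0)/y_0$ with $y = L_\theta(\nu)$ and $y_0 = L_\theta(\mu_\alpha)$, and noting from Lemma \ref{lem:compu1} that $L_\theta(\mu_\alpha) = e^{\Lambda_0(\alpha - \theta) - \Lambda_0(\alpha)}$, we obtain
\[
f_\theta(\nu) - f_\theta(\mu_\alpha) \le \frac{L_\theta(\nu)}{L_\theta(\mu_\alpha)} - 1 = \nu(h_\alpha) - 1,
\]
where $h_\alpha(x) = \exp\!\left(-\iprod{\theta,x} - \Lambda_0(\alpha-\theta) + \Lambda_0(\alpha)\right)$. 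Thus the linear functional $\iprod{\mu_\alpha^*, \nu} := \nu(h_\alpha)$ (which is $\topo^*$-continuous because $h_\alpha$ is a positive multiple of $e^{-\iprod{\theta,x}} \in \VS^*$) satisfies $\iprod{\mu_\alpha^*, \mu_\alpha} = 1$ (matching our normalization) and $f_\theta(\nu) - f_\theta(\mu_\alpha) \le \iprod{\mu_\alpha^*, \nu - \mu_\alpha}$, so $\mu_\alpha^* \in \partial f_\theta(\mu_\alpha) \subseteq \partial \varphi(\mu_\alpha)$.

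Finally, I would identify the associated portfolio measure $\pi_\alpha$ from Lemma \ref{lem:supdiff}. The characterization \eqref{eq:subgradprob} forces $d\pi_\alpha / d\mu_\alpha = h_\alpha$, and then
\[
\frac{d\pi_\alpha}{d\mu_0}(x) = h_\alpha(x)\, \frac{d\mu_\alpha}{d\mu_0}(x) = e^{-\iprod{\theta,x} - \Lambda_0(\alpha-\theta) + \Lambda_0(\alpha)} \cdot e^{\iprod{\alpha,x} - \Lambda_0(\alpha)} = e^{\iprod{\alpha - \theta, x} - \Lambda_0(\alpha - \theta)},
\]
which is precisely $d\mu_{\alpha - \theta}/d\mu_0$ by Definition \ref{defn:exp}. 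Hence $\pi_\alpha = \mu_{\alpha-\theta}$. There is no serious obstacle here; the only point to watch is checking the normalization $\iprod{\mu_\alpha^*, \mu_\alpha} = 1$ so that the supergradient produced by the logarithmic inequality matches the convention fixed before Lemma \ref{lem:supdiff}, and confirming $\topo^*$-continuity of $\nu \mapsto \nu(h_\alpha)$, both of which are immediate.
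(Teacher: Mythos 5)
Your proposal is correct and follows the paper's proof in all essentials: the same reduction via Lemma \ref{lem:diffident} to the family $f_\theta(\mu)=\log\mu\left(e^{-\iprod{\theta,x}}\right)-\psi^0(\theta)$, the same verification that the infimum is attained at $\theta$ via the conjugacy identity $\psi(\alpha)+\psi^0(\theta)=\Lambda_0(\alpha-\theta)$ together with \eqref{lem:phiatalpha} and Lemma \ref{lem:compu1}, and the same identification $\pi_\alpha=\mu_{\alpha-\theta}$ through the Radon--Nikodym derivative $d\pi_\alpha/d\mu_\alpha=h_\alpha$. The only local difference is harmless and arguably cleaner: where the paper computes the G\^{a}teaux derivative of $f_\theta$ along the segment $\mu(t)=(1-t)\mu_\alpha+t\nu$ and then invokes concavity of $t\mapsto f_\theta(\mu(t))$ to promote the derivative to a supergradient, you get the supergradient inequality in one stroke from the tangent-line bound $\log y\le \log y_0+(y-y_0)/y_0$, which sidesteps the paper's unjustified differentiation inside the expectation.
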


\begin{proof} We apply Lemma \ref{lem:diffident} to the choice: $f_\theta(\mu)=  \log \mu\left( e^{-\iprod{\theta, x}}\right) - \psi^0(\theta)$, $\theta \in \rr^d$. By \eqref{lem:phiatalpha} and the assumption that $(\alpha, \theta)\in \partial^{\Lambda_0}\psi$, we have
\[
\begin{split}
\varphi\left( \mu_\alpha  \right)&= \psi(\alpha) - \Lambda_0(\alpha)= \Lambda_0(\alpha-\theta) - \Lambda_0(\alpha) - \psi^0(\theta)\\
&= \log\mu_\alpha\left( e^{-\iprod{\theta, x}} \right) - \psi^0(\theta)=f_\theta\left( \mu_\alpha\right). 
\end{split}
\]
The first equality in the second line in the above display is due to Lemma \ref{lem:compu1}. From \eqref{eq:defineexpcnv} we get $\varphi(\mu)=\inf_{\theta}f_\theta(\mu)$. However from the above display, we also get $F(\mu_\alpha)=f_\theta(\mu_\alpha)$ for each $(\alpha, \theta)\in \partial^{\Lambda_0}\psi$. 

By Lemma \ref{lem:diffident}, it is sufficient to show the existence of the superdifferential with the claimed properties for $f_\theta$. In fact, we find the G\^{a}teaux derivative of $f_\theta$ at $\mu_\alpha$. To do this, fix $\nu \in \Omega_0$ and consider the family of probability measures $\mu(t)=(1-t)\mu_\alpha + t \nu$, for $0\le t \le 1$. Clearly $\mu(0)=\mu_\alpha$. We are interested in the limit:
\[
f_\theta'\left( \mu_\alpha  \right):=\lim_{t\rightarrow 0+} \frac{f_\theta(\mu(t)) - f_\theta(\mu_\alpha)}{t}=\frac{d}{dt}\Big\vert_{t=0+} \log \left[ \mu(t)\left(  \exp\left( -\iprod{\theta, x} \right)\right)\right].
\]
The last expression, via differentiation within the expectation, gives us
\[
\frac{\nu\left( \exp\left( -\iprod{\theta, x}  \right)  \right)}{\mu_\alpha\left( \exp\left( -\iprod{\theta, x} \right) \right)}-1 = \nu\left( h_\alpha(x) \right) -1.
\]
That proves that $f_\theta$ is G\^{a}teaux differentiable at $\mu_\alpha$ with the above derivative. 

Notice that $\mu_\alpha\left( h_\alpha(x)  \right)= 1$ by Lemma \ref{lem:compu1}. Hence $\nu\left( h_\alpha(x) \right) -1= (\nu- \mu_\alpha)\left( h_\alpha(x)  \right)$. Since $t\mapsto f_\theta(\mu(t))$ is concave, we also get the following expression of a supergradient:
\[
f_\theta(\nu) \le f_\theta(\mu_\alpha) + \left(\nu - \mu_\alpha\right)\left( h_\alpha(x)  \right).
\]
By Lemma  \ref{lem:diffident} this proves the claim regarding the existence of the supergradient of $\varphi$ at $\mu_\alpha$. The formula for $\pi_\alpha$ follows immediately from the Radon-Nikodym derivative 
\[
\frac{d\pi_\alpha}{d\mu_\alpha}(x)= h_\alpha(x),
\]
and an application of Lemma \label{lem:supdiff}.
\end{proof}

Thus the parameter of the portfolio map at $\mu_\alpha$ is $\alpha-\theta$, where $(\alpha, \theta)\in \partial^{\Lambda_0} \varphi$. Hence, as mentioned in the Introduction, if the support of the optimal transport includes the point $(\alpha, \theta)$, we see it embedded as the pair of elements $\left( \mu_\alpha, \mu_{\alpha-\theta}  \right)$ in the subdifferential of a true concave function on the space of probability measures. 

\section*{Acknowledgement} I am indebted to Prof. B. V. Rao for teaching me probability. He remains one of the finest teachers I have encountered in my life and interactions with him are some of my fondest memories from ISI. I wish him all the best on the occasion of his $70$th birthday. I also thank an anonymous referee for useful comments.

\bibliographystyle{alpha}

\bibliography{infogeo}

\end{document}